\documentclass[a4paper,12pt]{article}
\usepackage{comment}
\usepackage{cite}
\usepackage{amsmath}
\usepackage{amssymb}
\usepackage{amsfonts}
\usepackage[T1]{fontenc}
\usepackage[utf8]{inputenc}
\usepackage{graphicx}
\usepackage{fancyhdr}
\usepackage{float}
\usepackage{xcolor}
\usepackage{authblk}
\usepackage{mathrsfs}
\usepackage{empheq}
\usepackage[hyphens]{url}
\usepackage{hyperref}
\usepackage[]{breakurl}
\usepackage[]{amsthm}
\usepackage{tikz}
\usetikzlibrary{decorations.markings}

\pagestyle{fancy}
\rhead{
       \thepage}
\lhead{}       
\cfoot{}

\usepackage{geometry}
 \geometry{
 a4paper,
 total={17cm,23cm},
 left=2cm,
 top=3cm,
 }

\newtheorem{theorem}{Theorem}

\begin{document}

\title{Generalized relations between arithmetic functions}

\author[$\dagger$]{Jean-Christophe {\sc Pain}\footnote{jean-christophe.pain@cea.fr}\\
\small
$^1$CEA, DAM, DIF, F-91297 Arpajon, France\\
$^2$Universit\'e Paris-Saclay, CEA, Laboratoire Mati\`ere en Conditions Extr\^emes,\\
F-91680 Bruy\`eres-le-Ch\^atel, France
}

\date{}

\maketitle

\begin{abstract}
The aim of this article is to present in a self–contained way identities arising in elementary number theory, among which the following one:
\[
\sum_{d\mid n}\frac{\mu^2(d)}{\varphi(d)\,d^s}=\prod_{p\mid n}\left(1+\frac{1}{(p-1)p^s}\right).
\]
This formula expresses a non–trivial divisor sum involving the M\"obius function $\mu$ and Euler's totient function $\varphi$ as a simple and explicit multiplicative expression. This is a generalization of the remarkable Dineva formula, which corresponds to $s=0$ and gives $n/\varphi(n)$ on the right-hand side. We explain why only squarefree divisors are involved, show how multiplicativity naturally comes into play, and interpret the identity as a finite Euler product. Beyond this one–parameter family of generalizations, we describe a general method for constructing similar formulas and present several examples. Finally, we reformulate these identities in terms of partial zeta functions, thus emphasizing their close relationship with the classical theory of Euler products and the Riemann zeta function. The connection with the Selberg sieve is briefly outlined.
\end{abstract}

\section{Introduction}\label{sec1}

Divisor sums and multiplicative functions form one of the fundamental tools of elementary and analytic number theory. Many classical identities reveal that arithmetic functions admit two complementary descriptions: on the one hand, as sums over divisors, and on the other hand, as products indexed by prime numbers. These two points of view are linked by the principle of multiplicativity and are at the heart of Euler's discovery that prime numbers control the structure of the integers. A very well–known example is the identity
\[
\sum_{d\mid n}\varphi(d)=n,
\]
which reflects the fact that the integers between $1$ and $n$ can be grouped according to the value of their greatest common divisor with $n$ \cite{Apostol}. Another, less classical but equally elegant, formula is
\[
\sum_{d\mid n}\frac{\mu^2(d)}{\varphi(d)}=\frac{n}{\varphi(n)},
\]
which is often referred to as the Dineva formula \cite{Dineva}. It shows that a weighted sum over the squarefree divisors of $n$ collapses to a very simple expression involving only $n$ and its Euler totient.

The purpose of this article is to propose generalizations of such a formula, as well as a general technique to derive new relations. In section \ref{sec2}, we explain why this identity is natural and how it follows almost automatically from multiplicativity. In section \ref{sec3}, we show how it fits into a more general framework, and propose new identities. We also see, in the same section, that it admits a formulation in terms of finite Euler products and partial zeta functions, which makes its structure particularly transparent.

\section{The Dineva formula}\label{sec2}

The Dineva identity reads
\[
\sum_{d\mid n}\frac{\mu^2(d)}{\varphi(d)}=\frac{n}{\varphi(n)},
\]
and the M\"obius function $\mu(d)$ is defined by
\[
\mu(d)=
\begin{cases}
1 & \text{if } d=1,\\
(-1)^k & \text{if } d \text{ is a product of } k \text{ distinct primes},\\
0 & \text{if } d \text{ is divisible by the square of a prime}.
\end{cases}
\]
Therefore, $\mu^2(d)$ takes only the values $0$ and $1$, and one has
\[
\mu^2(d)=
\begin{cases}
1 & \text{if } d \text{ is squarefree},\\
0 & \text{otherwise}.
\end{cases}
\]
Finally, $\varphi(d)$ denotes Euler's totient function, that is, the number of integers between $1$ and $d$ which are relatively prime to $d$. With this interpretation, the sum
\[
\sum_{d\mid n}\frac{\mu^2(d)}{\varphi(d)}
\]
actually runs only over the squarefree divisors of $n$. The Dineva formula thus states that if one sums the reciprocals of $\varphi(d)$ over all squarefree divisors $d$ of $n$, the result is simply $n/\varphi(n)$. The proof relies on a fundamental idea: multiplicativity. Recall that an arithmetic function $f$ is said to be multiplicative if
\[
f(ab)=f(a)f(b)
\quad\text{whenever}\quad \gcd(a,b)=1.
\]
Both $\mu^2(d)$ and $\varphi(d)$ are multiplicative functions, and it follows that the left–hand side of the Dineva formula defines a multiplicative function of $n$. The right–hand side, $n/\varphi(n)$, is also multiplicative. Hence, it suffices to verify the identity when $n$ is a power of a prime. Let $n=p^k$, where $p$ is prime and $k\ge 1$. The divisors of $p^k$ are
\[
1,p,p^2,\dots,p^k,
\]
but among them only $1$ and $p$ are squarefree. Therefore,
\[
\sum_{d\mid p^k}\frac{\mu^2(d)}{\varphi(d)}
=\frac{\mu^2(1)}{\varphi(1)}+\frac{\mu^2(p)}{\varphi(p)}
=1+\frac{1}{p-1}
=\frac{p}{p-1}.
\]
On the other hand, one has
\[
\frac{n}{\varphi(n)}
=\frac{p^k}{\varphi(p^k)}
=\frac{p^k}{p^k-p^{k-1}}
=\frac{p}{p-1},
\]
and thus the two sides agree for all prime powers. By multiplicativity, they agree for every positive integer $n$, which completes the proof. This identity can be rewritten in Euler product form. Indeed, since only squarefree divisors contribute, we may write
\[
\sum_{d\mid n}\frac{\mu^2(d)}{\varphi(d)}
=\prod_{p\mid n}\left(1+\frac{1}{\varphi(p)}\right)
=\prod_{p\mid n}\left(1+\frac{1}{p-1}\right)
=\prod_{p\mid n}\frac{p}{p-1}
=\frac{n}{\varphi(n)}.
\]
This expression shows clearly how each prime divisor of $n$ contributes independently to the value of the sum.

\section{A general construction principle and new identities}\label{sec3}

\subsection{Main steps of the derivation}\label{subsec31}

The Dineva formula is not an isolated phenomenon. It actually fits into a general mechanism for constructing identities of the form
\[
F(n)=\sum_{d\mid n}\mu^2(d)\,H(d),
\]
where $F$ is a multiplicative function. Since the term $\mu^2(d)$ vanishes whenever $d$ is not squarefree, the sum only depends on the distinct prime factors of $n$. In other words, $F(n)$ is determined by the radical of $n$, namely $\prod_{p\mid n} p$.

Suppose that $F$ is multiplicative and can be written as
\[
F(n)=\prod_{p\mid n} f(p),
\]
for some function $f$ defined on prime numbers. If we write
\[
f(p)=1+g(p),
\]
then expanding the product shows that
\[
F(n)
=\prod_{p\mid n}\bigl(1+g(p)\bigr)
=\sum_{d\mid n}\mu^2(d)\prod_{p\mid d} g(p).
\]
Indeed, the squarefree divisors $d$ of $n$ correspond exactly to the choices of subsets of the set of primes dividing $n$. For the Dineva formula, we take
\[
g(p)=\frac{1}{p-1},
\]
which yields
\[
\prod_{p\mid d} g(p)=\frac{1}{\varphi(d)}
\quad\text{for every squarefree } d.
\]

\subsection{Examples}\label{subsec32}

If we choose the function
\[
f(p)=1+\frac{1}{p},
\]
then we have
\[
\prod_{p\mid n}\left(1+\frac{1}{p}\right)
=\sum_{d\mid n}\frac{\mu^2(d)}{d}.
\]
This identity shows that the sum of $1/d$ over all squarefree divisors $d$ of $n$ admits a simple multiplicative description. More generally, for any real parameter $s$, one has
\[
\sum_{d\mid n}\frac{\mu^2(d)}{d^s}
=\prod_{p\mid n}(1+p^{-s}),
\]
and it turns out that this identity is closely related to the Riemann zeta function, since
\[
\prod_{p}(1+p^{-s})=\frac{\zeta(s)}{\zeta(2s)}.
\]
When infinite Euler products are involved, we implicitly assume $s>1$. Thus, the finite product over primes dividing $n$ can be viewed as a truncated version of a classical Euler product. Indeed, let us introduce the \emph{partial zeta function} associated with $n$:
\[
\zeta_n(s)=\prod_{p\mid n}\frac{1}{1-p^{-s}}.
\]
This is simply the Euler product for $\zeta(s)$ restricted to the primes dividing $n$. The above formula can therefore be recast into
\[
\sum_{d\mid n}\frac{\mu^2(d)}{d^s}=\frac{\zeta_n(s)}{\zeta_n(2s)}.
\]

\subsection{A generalization of the Dineva formula} \label{subsec33}

\begin{theorem}

For any real number  $s$, we have
\[
\sum_{d\mid n}\frac{\mu^2(d)}{\varphi(d)\,d^s}
=\prod_{p\mid n}\left(1+\frac{1}{(p-1)p^s}\right).
\]

\end{theorem}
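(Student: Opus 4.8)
The plan is to deduce this identity directly from the general construction principle of Section~\ref{subsec31}, with the single choice $g(p)=\dfrac{1}{(p-1)p^s}$. First I would set $f(p)=1+g(p)=1+\dfrac{1}{(p-1)p^s}$ and invoke the already-established expansion
\[
\prod_{p\mid n}\bigl(1+g(p)\bigr)=\sum_{d\mid n}\mu^2(d)\prod_{p\mid d}g(p),
\]
so that the right-hand side of the theorem equals $\sum_{d\mid n}\mu^2(d)\prod_{p\mid d}\dfrac{1}{(p-1)p^s}$. It then remains only to identify the inner product with $\dfrac{1}{\varphi(d)\,d^s}$ whenever $d$ is squarefree.

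For that identification I would use that both $\varphi$ and the power function $d\mapsto d^s$ are multiplicative, together with $\varphi(p)=p-1$ on primes. For a squarefree $d=p_1\cdots p_r$ this gives $\varphi(d)=\prod_i(p_i-1)$ and $d^s=\prod_i p_i^{s}$, hence $\prod_{p\mid d}\dfrac{1}{(p-1)p^s}=\dfrac{1}{\varphi(d)\,d^s}$; for non-squarefree $d$ the factor $\mu^2(d)$ kills the term anyway, so the value of the sum is unaffected. Substituting back yields exactly $\sum_{d\mid n}\dfrac{\mu^2(d)}{\varphi(d)\,d^s}$ on the left, proving the claim.

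As an alternative, or as a sanity check, I would note that the left-hand side is a divisor sum of the multiplicative function $d\mapsto \mu^2(d)/(\varphi(d)\,d^s)$ and is therefore multiplicative in $n$, while the right-hand side is visibly multiplicative; so it suffices to match both sides at prime powers $n=p^k$, where only $d=1$ and $d=p$ survive, giving $1+\dfrac{1}{\varphi(p)\,p^s}=1+\dfrac{1}{(p-1)p^s}$ on each side. I do not anticipate a genuine obstacle here: the only point requiring a word of justification is the passage $\varphi(d)=\prod_{p\mid d}(p-1)$ for squarefree $d$, together with the convention that the empty product (the case $d=1$) equals $1$, so that the $d=1$ term contributes $1$ on both sides. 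The parameter $s$ plays no essential role, since for each fixed $n$ every manipulation is a finite algebraic identity.
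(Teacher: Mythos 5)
Your proposal is correct, but your primary route differs from the one the paper actually takes for the theorem. The paper's proof is the multiplicativity-at-prime-powers argument that you relegate to a ``sanity check'': it defines $F(n)=\sum_{d\mid n}\mu^2(d)/(\varphi(d)\,d^s)$, observes that $F$ is a divisor sum of a multiplicative function and hence multiplicative, evaluates $F(p^k)=1+\tfrac{1}{(p-1)p^s}$ (only $d=1$ and $d=p$ being squarefree), and concludes by multiplying over the prime factorization of $n$. Your main argument instead instantiates the general construction principle of Section~\ref{subsec31} with $g(p)=\tfrac{1}{(p-1)p^s}$, using the subset expansion $\prod_{p\mid n}(1+g(p))=\sum_{d\mid n}\mu^2(d)\prod_{p\mid d}g(p)$ and then identifying $\prod_{p\mid d}\tfrac{1}{(p-1)p^s}=\tfrac{1}{\varphi(d)\,d^s}$ for squarefree $d$ via multiplicativity of $\varphi$ and $d\mapsto d^s$, with the empty product handling $d=1$. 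That identification is correct, so your derivation is sound; it is arguably more in the spirit of the paper's stated philosophy, since it reuses the general mechanism rather than re-proving multiplicativity ad hoc, and it makes transparent why only the radical of $n$ matters. What it costs is a dependence on the expansion formula of Section~\ref{subsec31}, which the paper justifies only by the brief remark that squarefree divisors correspond to subsets of the prime divisors of $n$; the paper's own proof is self-contained and avoids leaning on that step, at the price of repeating the standard multiplicativity argument. Since you also supply the prime-power verification as a fallback, nothing is missing either way.
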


\begin{proof}

Let us define
\[
F(n)=\sum_{d\mid n}\frac{\mu^2(d)}{\varphi(d)\,d^s}.
\]
We first show that $F$ is multiplicative. Indeed, the function $\mu^2(d)$ is multiplicative, $\varphi(d)$ is multiplicative, and $d^s$ is completely multiplicative. Hence their quotient
\[
\frac{\mu^2(d)}{\varphi(d)\,d^s}
\]
is multiplicative, and therefore the divisor sum defining $F(n)$ is a multiplicative function of $n$.

Consequently, it suffices to evaluate $F(n)$ on prime powers. Let $n=p^k$ with $p$ prime and $k\ge1$. The divisors of $p^k$ are
\[
1,p,p^2,\dots,p^k,
\]
but among them only $1$ and $p$ are squarefree. Hence
\[
F(p^k)
=\frac{\mu^2(1)}{\varphi(1)\,1^s}
+\frac{\mu^2(p)}{\varphi(p)\,p^s}.
\]
Since
\[
\mu^2(1)=1,\qquad \varphi(1)=1,
\]
and
\[
\mu^2(p)=1,\qquad \varphi(p)=p-1,
\]
we obtain
\[
F(p^k)=1+\frac{1}{(p-1)p^s}.
\]

Thus, for every prime $p$ and every integer $k\ge1$, one can write
\[
F(p^k)=1+\frac{1}{(p-1)p^s}.
\]
Now let
\[
n=\prod_{p\mid n} p^{\alpha_p}
\]
be the prime factorization of $n$. By multiplicativity of $F$, we conclude that
\[
F(n)=\prod_{p\mid n} F(p^{\alpha_p})
=\prod_{p\mid n}\left(1+\frac{1}{(p-1)p^s}\right),
\]
and therefore
\[
\sum_{d\mid n}\frac{\mu^2(d)}{\varphi(d)\,d^s}
=\prod_{p\mid n}\left(1+\frac{1}{(p-1)p^s}\right),
\]
which is exactly the desired identity.
\end{proof}

Starting with the left-hand side, one gets
\[
1 + \frac{1}{(p-1)p^s} = \frac{(p-1)p^s + 1}{(p-1)p^s}.
\]
Expanding and simplifying gives
\[
\frac{p^{s+1} - p^s + 1}{(p-1)p^s} = \frac{1 - p^{-1} + p^{-(s+1)}}{1 - p^{-1}}
\]
which yields the alternate expression
\[
\sum_{d\mid n}\frac{\mu^2(d)}{\varphi(d)\,d^s}=\prod_{p\mid n}\frac{1 - p^{-1} + p^{-(s+1)}}{1 - p^{-1}}.
\]
Since $\zeta_p(1)=\frac{1}{1-p^{-1}}$, this factor can also be written as
\[
\sum_{d|n} \frac{\mu^2(d)}{\varphi(d)d^s} = \prod_{p|n} \left( 1 + \frac{\zeta_p(1)}{p^{s+1}} \right),
\]
with
\[
\zeta_p(s)=\frac{1}{1-p^{-s}}.
\]
Other Dirichlet series are recalled and discussed in Appendix A.

\section{Conclusion}

The Dineva formula provides a striking example of how simple and elegant identities emerge from the multiplicative structure of arithmetic functions. Although it involves several classical objects of number theory, like the M\"obius function or Euler's totient function, divisor sums, and Euler products, it ultimately reduces to a transparent and elementary statement. More importantly, the formula is not an isolated curiosity. We have shown that it belongs to a whole family of identities obtained by decomposing multiplicative functions into elementary contributions at each prime. The general construction principle explained here shows how such formulas can be systematically produced. Finally, the introduction of partial zeta functions highlights the close connection between these finite identities and the analytic theory of the Riemann zeta function. In this sense, the new formulas can be seen as finite and purely arithmetic reflections of much deeper analytic structures. There are strong connections between the simple formulas discussed here and the Selberg sieve \cite{Selberg1947,Hooley1976} (see Appendix B). 

In the future, we plan to investigate similar sum rules involving other arithmetic functions such as the number of divisors \cite{Pain2025}, the sum of divisors \cite{Sandor2015} or gcd-sum functions \cite{Toth2010} as well as generalizations of the M\"obius function itself \cite{Sandor2003}.

\section*{Appendix A: On some related Dirichlet series}

For a multiplicative function $f$, the sum over the divisors of $n$ can be expressed as a product over the prime powers $p^k$ that exactly divide $n$ (denoted by $p^k \parallel n$):
\[
\sum_{d|n} \frac{f(d)}{d^s} = \prod_{p^k \parallel n} \left( \sum_{j=0}^k \frac{f(p^j)}{p^{js}} \right),
\]
where $p^k \parallel n$ means that $p^k$ divides $n$ but $p^{k+1}$ does not divide $n$, i.e. $p^k \mid n$ and $p^{k+1} \nmid n$.

Let us set $h(d) = 1/(\varphi(d)\,d^s)$. For a prime $p$, the local factor is:
\[
h(p)=\frac{1}{(p-1)p^s}.
\]
If $n$ is squarefree, since $h$ is multiplicative, we obtain:
\[
\sum_{d|n}\mu^2(d)\,h(d) = \prod_{p|n} \left( 1 + h(p) \right).
\]
Substituting the expression for $h(p)$ leads to the identity:
\[
\sum_{d\mid n}\frac{\mu^2(d)}{\varphi(d)\,d^s}=\prod_{p\mid n}\frac{1 - p^{-1} + p^{-(s+1)}}{1 - p^{-1}}.
\]

The M\"obius function $\mu(d)$ satisfies a well-known identity related to the inverse of the partial zeta function $\zeta_n(s)$ (introduced in section \ref{sec3}):
\[
\sum_{d|n} \frac{\mu(d)}{d^s} = \prod_{p|n} (1 - p^{-s}) = \frac{1}{\zeta_n(s)}.
\]

If $\sigma(m)$ represents the sum of divisors of $m$, then for a prime power $p^k$, one has:
\[
\sigma(p^k) = \sum_{j=0}^{k} p^j = \frac{p^{k+1}-1}{p-1}.
\]
The corresponding local generating function factor is given by:
\[
\sum_{k=0}^{\infty} \frac{\sigma(p^k)}{p^{ks}} = \frac{1}{(1-p^{-s})(1-p^{1-s})}.
\]
Taking the product over all $p|n$, we establish the following relationship with the partial zeta functions:
\[
\prod_{p \mid n} \left( \sum_{k=0}^{\infty} \frac{\sigma(p^k)}{p^{ks}} \right) = \zeta_n(s) \zeta_n(s-1).
\]
Furthermore, using the fact that $\zeta_n(1) = n/\varphi(n)$, the product of the partial zeta functions at $1$ and $s$ can be expressed as:
\[
\zeta_n(1)\zeta_n(s) = \frac{n\,\zeta_n(s)}{\varphi(n)} = \prod_{p \mid n} \frac{1}{(1-p^{-1})(1-p^{-s})}.
\]

\section*{Appendix B: Connection with the Selberg sieve}

The Selberg sieve is a method for estimating the size of a set of integers that are not divisible by any prime numbers belonging to a given set. The efficiency of this sieve relies on the maximization of a quadratic form, which naturally leads to the sum investigated in this article.

\subsection*{The minimization problem}

In the Selberg sieve, one seeks to minimize the quantity 
\[
Q = \sum_{n \le X} \left( \sum_{d \mid n, d \le R} \lambda_d \right)^2
\]
subject to the constraint $\lambda_1 = 1$, where $R$ is a truncation parameter. Using the theory of multiplicative functions, it can be shown that the optimal choice of the coefficients $\lambda_d$ involves the function
\[
g(d) = \frac{1}{\varphi(d)\,d^s} \quad \text{or, in the classical case, } \quad g(d) = \frac{1}{\varphi(d)}.
\]

\subsection*{Emergence of the Dineva sum}

The core of Selberg's method consists in introducing the function $V(z)$ defined by the sum over squarefree divisors, or its discrete version  
\[
J_n = \sum_{d \mid n} \mu^2(d)\,g(d).
\]
The quantity $V(z)$ represents the continuous analogue of the finite sum $J_n$. While $J_n$ is a divisor sum associated with a fixed integer $n$, $V(z)$ describes the global density of integers surviving the sieve up to the level $z$ \cite{Halberstam}. In the case of the sieve for prime numbers, where $g(p) = 1/(p-1)$ (corresponding to $f(p)=1$), the sum becomes:
\[
J_n = \sum_{d \mid n} \frac{\mu^2(d)}{\varphi(d)}.
\]
The generalization presented here (Theorem 1) with the parameter $s$ corresponds to a modification of the sieve density, where each weight is weighted by $d^{-s}$. The identity demonstrated:
\[
\sum_{d \mid n} \frac{\mu^2(d)}{\varphi(d) d^s} = \prod_{p \mid n} \left( 1 + \frac{1}{(p-1)p^s} \right)
\]
shows that this critical quantity remains perfectly multiplicative, which allows for the analytical calculation of the sieve bound.

\subsection*{Optimal weights}

The optimal Selberg coefficients $\lambda_d$ are then given by the formula:
\[
\lambda_d = \mu(d) \frac{J_{n/d}}{J_n}.
\]

Thanks to the Euler product structure established in this work, these coefficients simplify dramatically. If we denote
\[
h_s(p) = \frac{1}{(p-1)p^s} = g(p),
\]
then for squarefree $d$:
\[
\lambda_d = \mu(d) \prod_{p \mid d} \frac{1}{1 + h_s(p)}.
\]
This form demonstrates that the Selberg sieve weights decrease more rapidly as the parameter $s$ increases, directly influencing the convergence of the method.
\end{document}